\documentclass[12pt,leqno]{article}

\usepackage{amsmath,amssymb,amsthm, amsfonts}
\usepackage[all]{xy}

\makeatletter

%%%%%%%%%%%%%%%%%%%%%%%%%%%%%%
%% theorems
%%%%%%%%%%%%%%%%%%%%%%%%%%%%%%

\newtheorem{theorem}{Theorem}[section]
\newtheorem{proposition}[theorem]{Proposition}
\newtheorem{lemma}[theorem]{Lemma}

\theoremstyle{definition}
\newtheorem{example}[theorem]{Example}
\newtheorem{definition}[theorem]{Definition}

\theoremstyle{remark}
\newtheorem{remark}[theorem]{\sc{Remark}}

%%%%%%%%%%%%%%%%%%%%%%%%%%%%%
%% title
%%%%%%%%%
	\title{Graphs and Ideals generated by some $2$-minors}
	\author{{\sc Masahiro Ohtani}}
	\date{\normalsize
Graduate School of Mathematics, Nagoya University\\
Chikusa-ku,  Nagoya 464--8602 JAPAN\\
{\small \tt m05011w@math.nagoya-u.ac.jp}}

%%%%%%%%%%%%%%%%%%%%%%%%%%%%%

\newcommand{\kk}[1]{\noindent {\rm (#1)}}

%%%%%%%%%%%%%%%%%%%%%%%%%%%%%%

\let\Mathrm\operator@font

\def\standop#1{\mathop{\Mathrm #1}\nolimits}
\def\difstop#1#2{\expandafter\def\csname #1\endcsname{\standop{#2}}}
\def\defstop#1{\difstop{#1}{#1}}

\defstop{Ass}
\defstop{Assh}
\difstop{characteristic}{char}
\difstop{height}{ht}
\difstop{initial}{in}
\defstop{lcm}
\defstop{Spec}

%%%%%%%%%%%%%%%%%%%%%%%%%%%%%%
%% Section title should not be so large
%%%%%%%%%%%%%%%%%%%%%%%%%%%%%%
\def\section{\@startsection{section}{1}{\z@ }%
{-3.5ex plus -1ex minus -.2ex}{2.3ex plus .2ex}{\bf }}

\long\def\refname{\par\kern -3ex
\begin{center}\rm R\sc{eferences}\end{center}\par\kern -2ex}

\def\@seccntformat#1{\csname the#1\endcsname.\quad}

\def\@@@sect#1#2#3#4#5#6[#7]#8{%
	\ifnum #2>\c@secnumdepth 
		\def \@svsec {}\else \refstepcounter {#1}%
		\def\@svsec{}
	\fi 
	\@tempskipa #5\relax 
	\ifdim \@tempskipa >\z@ 
		\begingroup #6\relax \@hangfrom {\hskip #3\relax 
		\@svsec}{\interlinepenalty \@M #8\par }\endgroup 
		\csname #1mark\endcsname {#7}
	\else 
	\def \@svsechd {#6\hskip #3\@svsec #8\csname #1mark\endcsname {#7}}
	\fi \@xsect {#5}}

\def\@@@startsection#1#2#3#4#5#6{%
	\if@noskipsec \leavevmode \fi \par \@tempskipa #4\relax \@afterindenttrue 
	\ifdim \@tempskipa <\z@ \@tempskipa -\@tempskipa \@afterindentfalse 
	\fi \if@nobreak \everypar {}\else \addpenalty {\@secpenalty }\addvspace 
		{\@tempskipa }\fi \@ifstar {\@ssect {#3}{#4}{#5}{#6}}{\@dblarg 
		{\@@@sect {#1}{#2}{#3}{#4}{#5}{#6}}}}

%%%%%%%%%%%%%%%%%%%%%%%%%%%%%%
%% paragraph 
%%%%%%%%%%%%%%%%%%%%%%%%%%%%%%
\def\theparagraph{\thesection.\arabic{paragraph}}
\def\aparagraph{\@@@startsection{paragraph}{2}{\z@ }%
              {1.75ex plus .2ex minus .15ex}{-1em}{\bf(\theparagraph) } }
\def\paragraph{\@@@startsection{paragraph}{2}{\z@ }%
              {1.75ex plus .2ex minus .15ex}{-1em}{}{\bf(\theparagraph)} }
\c@secnumdepth 3
\let\c@theorem\c@paragraph
%%%%%%%%%%%%%%%%%%%%%%%%%%%%%%

\begin{document}

\maketitle

\begin{abstract}
 Let $G$ be a finite graph on $[n] = \{1,2,\ldots,n\}$,
$X$ a $2 \times n$ matrix of indeterminates over a field $K$ and
$S = K[X]$ a polynomial ring over $K$.
 In this paper,
we study about ideals $I_G$ of $S$ generated by $2$-minors $[i,j]$ of $X$
which correspond to edges $\{i,j\}$ of $G$.
 In particular, we construct a Gr\"obner basis of $I_G$
as a set of paths of $G$ and compute a primary decomposition.
\end{abstract}

\section{Introduction}

 Let $K$ be a field, $X = (X_{ij})$ an $m \times n$ generic matrix over $K$
and $r \le \min(m,n)$ a positive integer.
 The ideal $I_r(X)$ generated by {\it all} $r$-minors of $X$
in a polynomial ring $S = K[X_{ij} \mid 1 \le i \le m, 1 \le j \le n]$ is
called {\it the determinantal ideal} and
is studied by many researchers from many different viewpoints.
 For example, $I_r(X)$ is a prime ideal in $S$ and
the quotient ring $S/I_r(X)$ is a Cohen-Macaulay ring,
see Bruns and Herzog \cite{BH} or Bruns and Vetter \cite{BV}.
%%%%%%
%%  In particular, 
%% $S/I_r(X)$ is an invariant subring 
%% with respect to a $GL_r$-action to a polynomial ring,
%% so it has at most rational singularities if $\characteristic K = 0$ \cite{Bou}
%% and is strongly $F$-regular if $\characteristic K > 0$ \cite{Has}.
%%%%%%

 In contrast, some kinds of ideals generated by {\it some} minors of $X$ have been thought.

 Conca defined ladder determinantal ideals in \cite{Con}.
 They are prime ideals and the quotient rings are Cohen-Macaulay.

 Diaconis, Eisenbud and Sturmfels studied the ideal generated all ``adjacent''
$2$-minors in a $2 \times n$ generic matrix \cite{DES}.
 An adjacent $2$-minor of a $2 \times n$ matrix 
is the determinant of a submatrix
with column indices $j$ and $j+1$ for $j = 1,2, \ldots, n-1$.
 This ideal is generated by a regular sequence and is not prime if $n>2$.
 They compute a primary decomposition and all minimal prime ideals of them.
 Ho\c{s}ten and Sullivant studied {\it ideals of adjacent minors}
as a generalization of ideals of Diaconis, Eisenbud and Sturmfels in \cite{HS}.
 An adjacent $r$-minor of $X$ is the determinant of a submatrix
with row indices $a_1, a_2, \ldots, a_r$ and column indices $b_1, b_2, \ldots, b_r$
where these indices are consecutive integers.
 They compute the minimal prime ideals of ideals of adjacent minors.
 
 Diaconis, Eisenbud and Sturmfels also found a minimal primary decomposition
of the ideal which is generated by all ``corner minors''  in \cite{DES}.
 A corner minor is the determinant of a $2 \times 2$ submatrix with
row indices $1, i$ and column indices $1, j$.

 In this paper,
we study the following ideals generated by some $2$-minors 
of a $2 \times n$ generic matrix.

 From now on, we use the following notation.
 Let 
$$ X = 
\begin{pmatrix}
	 X_1 & X_2 & \cdots & X_n \\
	 Y_1 & Y_2 & \cdots & Y_n
\end{pmatrix}$$
be a $2 \times n$ generic matrix over a field $K$
and $S = K[X_i,Y_i \mid 1 \le i \le n]$ a polynomial ring.
 For two integers $i, \, j \in [n] := \{1, 2, \ldots, n \}$,
we denote the $2$-minor
$$\det \begin{pmatrix}
	 X_i & X_j \\
	 Y_i & Y_j 
\end{pmatrix} = X_i Y_j - Y_i X_j$$
of $X$ by $[i,j]$.
 Let $G$ be a simple graph on $[n]$,
i.e., $G$ is a graph which does not have multiple edges or loops,
and we define an ideal $I_G$ as follows :
\footnote{After the first version of this paper was written, very recently,
it has been brought to my attention that
Herzog, Hibi, Hreinsd\'ottir, T. Kahle and  J. Rauh wrote a paper \cite{HHH}
which has considerable overlaps with this paper.}
	$$ I_G := \left( [i,j] \mid \text{$\{ i, j \}$ is an edge of $G$} \right). $$

 If $G$ is a path,
the ideal $I_G$ coincides with the ideal of adjacent minors of $X$.
 If $G$ is a star graph,
then the ideal $I_G$ coincides with the ideal of corner minors of $X$.
 So ideals $I_G$ are generalizations of these ideals.

 Properties of the ideal $I_G$ are closely connected to properties of the graph $G$.
 The aim of this paper is to describe a Gr\"obner basis 
and a primary decomposition of $I_G$ in relation to the datum of $G$.
 Particularly, we construct a Gr\"obner basis as ``a set of paths'' of $G$ and
describe an algorithm to compute a primary decomposition by ``operations'' of graphs.

 This paper is organized as follows.
 Section~2 is preliminaries.
 We define some properties of paths, e.g., minimality and irreducibility,
prove some facts and prepare some operations.
 In particular, decompositions of a path into a sum of irreducible paths
play important roles in Section~3.
 In Section~3, we calculate a Gr\"obner basis of $I_G$
as the set of all irreducible paths of $G$.
 In Section~4, we construct an ``algorithm'' of computation of a primary decomposition of $I_G$.
 In Section~5, we prove some results about connections between
some properties of ideals $I_G$ and graphs $G$.
 For example, we give a necessary condition for existence of Hamilton cycles of $G$.

\noindent{\sc Acknowledgment}.
 The author is grateful to Mitsuyasu Hashimoto, Yuhi Sekiya and Ken-ichi Yoshida for valuable conversations and helpful suggestions.
 He also expresses his thanks to the referee for his many pieces
of valuable advice.
 In particular, the example in Remark~\ref{countex.rmk} is due to him.

 The author was partially supported by JSPS Research Fellowships for Young Scientists.

\section{Preliminaries}

\paragraph\label{paths.par}
 A {\it walk} of $G$ is a sequence $p_0 e_0 p_1 e_1 \ldots e_\ell p_\ell$
satisfying that each $p_i$ is a vertex of $G$ for $i = 0, 1, \ldots, \ell$,
i.e., $p_i \in [n]$,
that each $e_i$ is an edge of $G$ which connects vertices $p_{i-1}$ and $p_i$ for $i = 1, 2, \ldots, \ell$
and that $p_0 \le p_\ell$.
 A walk $P = p_0 e_0 p_1 e_1 \ldots e_\ell p_\ell$ is a {\it path}
if additionally it holds $p_i \ne p_j$ for each indices $i \ne j$.
 In this case, we call $\ell$ the {\it length} of $P$.
 A walk $P = p_0 e_0 p_1 e_1 \ldots e_\ell p_\ell e_{\ell+1} p_0$ is
a {\it cycle} if the subsequence $p_0 e_0 p_1 e_1 \ldots e_\ell p_\ell$ is a path of $G$.

 For a path $P = p_0 e_1 p_1 e_2 \ldots e_\ell p_\ell$, 
we call an element of
	$V(P) = \{ p_i \mid 0 \le i \le \ell \}$
	(resp. $J(P) = \{ p_i \mid 0 < i < \ell \}$, and $E(P) = \{ p_0, p_\ell \}$)
a {\it vertex} (resp. a {\it joint}, and an {\it end}) of $P$.

\paragraph\label{abuse.par}
 Now $G$ is simple, so we can write $P = p_0 p_1 \ldots p_\ell$
short for a walk $P = p_0 e_0 p_1 e_1 \ldots e_\ell p_\ell$
with no confusion.

\paragraph\label{subwalk.par}
 Let $P = p_0 p_1 \ldots p_\ell$ be a walk.
 We take two vertices $p_i$ and $p_j$ with $i < j$.
 If $p_i < p_j$, we call the subsequence $p_i p_{i+1} \ldots p_j$
the {\it subwalk} of $P$ from $i$ to $j$, denoted $P_{i \to j}$.
 If $p_i > p_j$, the sequence $p_j p_{j-1} \ldots p_i$ is also called
the subwalk of $P$ from $j$ to $i$, denoted $P_{j \to i}$.

\paragraph\label{sum.par}
 For two paths $P = p_0 p_1 \ldots p_\ell$ and $Q = q_0 q_1 \ldots q_m$
with $\sharp E(P) \cap E(Q) = 1$,
we get the {\it sum} $P+Q$ of paths $P$ and $Q$.
 For example, assume that $p_0 = q_0$ and $p_\ell < q_m$.
 Then the sum $P+Q$ is the walk
	$p_\ell p_{\ell -1} \ldots p_0 q_1 \ldots q_{m-1} q_m$.

\paragraph\label{order.par}
 We define an order on the set of walks of $G$.
 For two walks $P = p_0 p_1 \ldots p_\ell$ and $Q = q_0 q_1 \ldots q_m$,
we say that $P \le Q$ if there is a sequence of indices
	$0 = j_0 < j_1 < \ldots < j_\ell = m$
such that $q_{j_i} = p_i$ for each $i = 0, 1, \ldots, \ell$.

 A walk $P = p_0 p_1 \ldots p_\ell$ is minimal with respect to this order
if and only if $P$ is a path
and there never exists an edge which connects two vertices of $P$
which do not adjoin.

\paragraph\label{irred.par}
 Let $P = p_0 p_1 \ldots p_\ell$ be a minimal path of $G$.
 We say that $P$ is {\it irreducible} if it holds that
	$J(P) \cap (p_0, p_\ell) = \emptyset$,
where $(p_0,p_\ell)$ is the subset $\{ x \mid p_0 < x < p_m \}$
of the set of real numbers $\mathbb{R}$.

\paragraph\label{def.gP.par}
 For an irreducible path $P = p_0 p_1 \ldots p_\ell$,
we set $g_P = M_P \cdot [p_0, p_\ell]$, where
$$ M_P = \prod _{p \in J(P)} Z_p, ~~~~~Z_p = 
\begin{cases}
	 Y_p & \text{if $p < p_0$} \\
	 X_p & \text{if $p > p_\ell$}
\end{cases}.$$

\paragraph\label{path.dec.par}
 Let $P = p_0 p_1 \ldots p_\ell$ be a minimal path of $G$.
 We can decompose $P$ into a sum of irreducible paths.

 We get a sequence $0 = i_0 < i_1 < \ldots < i_s = \ell$ as follows.
 Set $i_0 := 0$ and
$$ p_{i_t} := \begin{cases}
	 \min J(P_{i_{t-1} \to \ell}) \cap (p_{i_{t-1}}, p_\ell)
& \text{if $P_{i_{t-1} \to \ell}$ is not irreducible} \\
	 p_\ell & \text{if $P_{i_{t-1} \to \ell}$ is irreducible}
\end{cases}$$
for $t>0$.
 By definition, each subpath $P_{i_{t-1} \to i_t}$ is irreducible
and $P = P_{i_0 \to i_1} + P_{i_1 \to i_2} + \cdots + P_{i_{s-1} \to i_s}$ holds.

\paragraph\label{mon.div.par}
 Let $P = p_0 p_1 \ldots p_\ell$ be an irreducible path and
$p_a$ a vertex of $P$.
 Assume that $p_a > p_m$.
 Then we decompose the subpath $P_{0 \to a}$ (or $P_{m \to a}$) into
a sum $P_{i_0 \to i_1} + P_{i_1 \to i_2} + \cdots + P_{i_{s-1} + i_s}$
of irreducible paths, see \kk{\ref{path.dec.par}}.

\begin{lemma}\label{mon.div.lem}
 Under the notation in \kk{\ref{mon.div.par}},
each monomial $M_{P_{i_{t-1} \to i_t}}$ for $t = 1, 2, \ldots, s$,
and each variable $X_{p_{i_t}}$ for $t = 1, 2, \ldots, s-1$, divides $M_P$.
\end{lemma}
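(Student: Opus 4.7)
The plan is to compare, for each joint $q$ of an irreducible piece $P_{i_{t-1} \to i_t}$, the variable $Z_q$ it contributes to $M_{P_{i_{t-1} \to i_t}}$ with the variable it contributes to $M_P$, and verify that the former divides the latter. I will treat $P_{0 \to a}$ explicitly; the case $P_{\ell \to a}$ is symmetric, using the observation that for a joint of $P$ the conditions $q < p_0$ and $q < p_\ell$ are equivalent by the irreducibility of $P$. First note that $0 < a < \ell$ (since $p_a > p_\ell \ge p_0$ and $P$ has distinct vertices), so every joint of a piece $P_{i_{t-1} \to i_t}$ sits at an index strictly between $0$ and $\ell$ in $P$ and is therefore a joint of $P$. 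By the irreducibility of $P$ and of the piece, such a joint $q$ satisfies both $q < p_0$ or $q > p_\ell$, and $q < p_{i_{t-1}}$ or $q > p_{i_t}$.

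\textbf{Part (1).} First suppose $q > p_{i_t}$. Then $q > p_{i_t} > p_{i_0} = p_0$, so $P$'s irreducibility forces $q > p_\ell$, and $Z_q = X_q$ in both monomials. Next suppose $q < p_{i_{t-1}}$. If $t = 1$, then $p_{i_{t-1}} = p_0$ gives $q < p_0$ immediately. If $t \ge 2$, the idea is to derive a contradiction from the hypothesis $q > p_\ell$ by iterating the minimality used to define the breakpoints: since $q \in (p_0, p_a)$ and $q \in J(P_{0 \to a})$, the defining property $p_{i_1} = \min J(P_{0 \to a}) \cap (p_0, p_a)$ forces $q \ge p_{i_1}$, hence $q > p_{i_1}$ by distinctness of indices. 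Iterating with $P_{i_1 \to a}, P_{i_2 \to a}, \ldots$ would yield $q > p_{i_k}$ for each $k = 1, \ldots, t-1$, contradicting $q < p_{i_{t-1}}$. Thus $q < p_0$ and $Z_q = Y_q$ in both monomials, so every variable of $M_{P_{i_{t-1} \to i_t}}$ appears in $M_P$ (with multiplicity one, since joints of $P$ are distinct).

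\textbf{Part (2) and the main obstacle.} Part (2) is immediate: each $p_{i_t}$ with $1 \le t \le s-1$ is a joint of $P_{0 \to a}$ lying in $(p_{i_{t-1}}, p_a) \subseteq (p_0, p_a)$, hence a joint of $P$ with $p_{i_t} > p_0$, so $P$'s irreducibility gives $p_{i_t} > p_\ell$ and $X_{p_{i_t}}$ divides $M_P$. The main obstacle is the iterated minimality argument in Part (1) in the sub-case $q < p_{i_{t-1}}$ with $t \ge 2$: at each iteration one must check that the hypothetical $q$ continues to lie in the strip $(p_{i_k}, p_a)$ and still belongs to $J(P_{i_k \to a})$, so that the definition of $p_{i_{k+1}}$ can be invoked to push $q$ strictly above $p_{i_{k+1}}$.
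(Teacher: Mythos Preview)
Your proof is correct and follows essentially the same approach as the paper. The only difference is cosmetic: for the case $q < p_{i_{t-1}}$, the paper iterates \emph{backward} (using the definition of $p_{i_{t-1}}$ to get $q < p_{i_{t-2}}$, then the definition of $p_{i_{t-2}}$, and so on down to $q < p_0$), whereas you assume $q > p_\ell$ and iterate \emph{forward} to a contradiction; the ``obstacle'' you flag is the routine verification hidden in the paper's terse phrase ``by definition of $p_{i_{t-1}}$ (or earlier one).''
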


\begin{proof}
 It holds that $p_{i_t} > p_0$ by definition,
so $p_{i_t} > p_m$ holds since $P$ is irreducible.
 Then $X_{p_{i_t}}$ divides $M_P$.

 Set $Q = P_{i_{t-1} \to i_t}$ and take a joint $p_j$ of $Q$.
 It is enough to prove that $M_P$ is divided by the variable $Z_{p_j}$
if so is $M_Q$.
 If $p_j > p_{i_t} > p_m$, then the variable $Z_{p_j} = X_{p_j}$ divides
$M_Q$ and $M_P$.
 On the other hand, assume that $p_j < p_{i_{t-1}}$.
 By definition of $p_{i_{t-1}}$ (or earlier one),
$p_j$ must be less than $p_0$.
 Then the variable $Z_{p_j} = Y_{p_j}$ divides $M_Q$ and $M_P$.
\end{proof}

\paragraph\label{connect.par}
 Let $P = p_0 p_1 \ldots p_\ell$ and $Q = q_0 q_1 \ldots q_m$ be
irreducible paths of $G$ with $p_0 = q_0$ and $p_\ell < q_m$.
 Take a minimal path $R = r_0 r_1 \ldots r_k \le P+Q$.
 We decompose $R$ into a sum
	$R_{i_0 \to i_1} + R_{i_1 \to i_2} + \cdots + R_{i_{s-1} \to i_s}$
of irreducible paths of $G$, see \kk{\ref{path.dec.par}}.

\begin{lemma}\label{connect.lem}
 Under the notation in \kk{\ref{connect.par}},
each monomial $M_{R_{i_{t-1} \to i_t}}$ for $t = 1, 2, \ldots, s$, and
each variable $X_{r_{i_t}}$ for $t = 1, 2, \ldots, s-1$,
divides $\lcm(M_P,M_Q) \cdot Y_{p_0}$.
\end{lemma}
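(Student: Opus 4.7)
My plan is to imitate the proof of Lemma~\ref{mon.div.lem}, adapted to accommodate the fact that in the present setting the common vertex $p_0 = q_0$ now sits in the interior of $P+Q$ rather than at an endpoint. The extra factor $Y_{p_0}$ in the target monomial is exactly what is needed to absorb this.

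I would start with a structural observation about the joints of $R$. Every vertex of $R$ lies in $V(P) \cup V(Q)$, and the two endpoints of $R$ are $r_0 = p_\ell$ and $r_k = q_m$, so any joint $r_j$ of $R$ satisfies $r_j \neq p_\ell$ and $r_j \neq q_m$. If moreover $r_j \in V(P) \setminus \{p_0\}$, then $r_j$ is a joint of $P$ and the irreducibility of $P$ forces $r_j < p_0$ or $r_j > p_\ell$, while if $r_j \in V(Q) \setminus \{p_0\}$, then $r_j$ is a joint of $Q$ with $r_j < p_0$ or $r_j > q_m$. Combining these, every joint $r_j$ of $R$ satisfies $r_j \le p_0$ or $r_j > p_\ell$.

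For the intermediate endpoints $r_{i_t}$, $t = 1, \ldots, s-1$, I would note that $r_{i_t} > r_{i_{t-1}} \ge r_{i_0} = p_\ell$, so by the structural observation $r_{i_t}$ is either a joint of $P$ with value exceeding $p_\ell$, yielding $X_{r_{i_t}} \mid M_P$, or a joint of $Q$ with value exceeding $q_m$, yielding $X_{r_{i_t}} \mid M_Q$; either way $X_{r_{i_t}}$ divides $\lcm(M_P,M_Q)\cdot Y_{p_0}$. For $M_{R_{i_{t-1}\to i_t}}$, I would examine each joint $r_j$ of the subpath $R_{i_{t-1}\to i_t}$; by irreducibility, $r_j < r_{i_{t-1}}$ or $r_j > r_{i_t}$. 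In the latter case $r_j > r_{i_t} > p_\ell$, and the structural observation handles the factor $X_{r_j}$ exactly as above.

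The delicate case is $r_j < r_{i_{t-1}}$, where $Z_{r_j} = Y_{r_j}$. Here I would mimic the induction used in the proof of Lemma~\ref{mon.div.lem}: since $r_{i_{t-1}}$ is, by the recipe of \kk{\ref{path.dec.par}}, the minimum element of $J(R_{i_{t-2} \to k}) \cap (r_{i_{t-2}}, r_k)$ and $r_j$ is also a joint of $R_{i_{t-2} \to k}$, the inequality $r_j < r_{i_{t-1}} < r_k$ forces $r_j \notin (r_{i_{t-2}}, r_k)$ and hence $r_j < r_{i_{t-2}}$. Iterating this step down to $t=1$ yields $r_j < r_{i_0} = p_\ell$, and coupled with the structural observation this gives $r_j \le p_0$. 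If $r_j = p_0$, then $Y_{r_j} = Y_{p_0}$ is absorbed by the extra factor; if $r_j < p_0$, then $r_j$ is a joint of $P$ or a joint of $Q$ with value less than $p_0$, so $Y_{r_j}$ divides $M_P$ or $M_Q$ accordingly. The main obstacle is precisely this iterated minimality argument; once it is in place, the novelty compared with Lemma~\ref{mon.div.lem} reduces to the single new possibility $r_j = p_0$, which is exactly what the additional factor $Y_{p_0}$ is designed to cover.
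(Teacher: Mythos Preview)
Your proof is correct and follows essentially the same approach as the paper's own argument: the paper's terse phrases ``so the variable $X_{r_{i_t}}$ divides $M_P$ or $M_Q$'' and ``by definition of $r_{i_{t-1}}$ (or earlier one)'' encode precisely your structural observation and your iterated minimality step, and the case $r_j = p_0$ absorbed by $Y_{p_0}$ is likewise handled identically. You have simply unpacked these steps in more detail than the paper does.
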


\begin{proof}
 It holds that $r_{i_t} > p_\ell > p_0 = q_0$ by definition,
so the variable $X_{r_{i_t}}$ divides $M_P$ or $M_Q$.

 We set $R' = R_{i_{t-1} \to i_t}$ and take a joint $r_j$ of $R'$.
 It is enough to prove that $\lcm (M_P,M_Q) \cdot Y_{p_0}$ is divided by
the variable $Z_{r_j}$ if so is $M_{R'}$.
 If $r_j > r_{i_t} > p_\ell$, then $r_j > p_0 = q_0$ and the variable
$Z_{r_j} = X_{r_j}$ divides $M_P$ or $M_Q$.
 On the other hand, assume that $r_j < r_{i_{t-1}}$.
 Then $r_j < r_{i_0} = p_\ell$ holds by definition of $r_{i_{t-1}}$
(or earlier one).
 So $r_j \le p_0 = q_0$ holds and the variable $Z_{r_j} = Y_{r_j}$ divides
$M_P$ or $M_Q$ or $Y_{p_0}$.
\end{proof}

\begin{remark}\label{reverse.rmk}
 We have analogues to discussions and results from \kk{\ref{path.dec.par}}
in the following situation.

 Let $P = p_0 p_1 \ldots p_\ell$ be a minimal path.
 We decompose $P$ into a sum
	  $P_{i_s \to i_{s-1}} + P_{i_{s-1} \to i_{s-2}} + \cdots + P_{i_1 \to i_0}$
of irreducible paths of $G$ as follows.

 Set $i_0 = \ell$ and we inductively define indices $i_t$ by
$$ p_{i_t} := \begin{cases}
	 \max J(P_{0 \to i_{t-1}}) \cap (p_0, p_{i_{t-1}})
& \text{if $P_{0 \to i_{t-1}}$ is not irreducible} \\
	 p_0 & \text{if $P_{0 \to i_{t-1}}$ is irreducible}
\end{cases}.$$

\kk{1}
 Let $P = p_0 p_1 \ldots p_\ell$ be an irreducible path of $G$
and $p_a$ a joint of $P$ with $p_a < p_0$.
 We decompose $P_{a \to 0}$ (or $P_{a \to m}$) into a sum
	 $P_{i_s \to i_{s-1}} + P_{i_{s-1} \to i_{s-2}} + \cdots + P_{i_1 \to i_0}$
of irreducible paths as above.
 Then each monomial $M_{P_{i_t \to i_{t-1}}}$ for $t = 1, 2, \ldots, s$, and
each variable $Y_{p_{i_t}}$ for $t = 1, 2, \ldots, s-1$, divides $M_P$.

\kk{2}
 Let $P = p_0 p_1 \ldots p_\ell$ and $Q = q_0 q_1 \ldots q_m$ be
irreducible paths of $G$ with $p_\ell = q_m$ and $p_0 < q_0$.
 Take a minimal path $R \le P+Q$.
 We decompose $R$ into a sum
	 $R_{i_s \to i_{s-1}} + R_{i_{s-1} \to i_{s-2}} + \cdots + R_{i_1 \to i_0}$
of irreducible paths as above.
 Then each monomial $M_{R_{i_t \to i_{t-1}}}$ for $t = 1, 2, \ldots, s$, and
each variable $Y_{r_{i_t}}$ for $t = 1, 2, \ldots, s-1$, divides
$\lcm (M_P, M_Q) \cdot X_{p_\ell}$.
\end{remark}

\section{Gr\"obner Basis}

\paragraph\label{refer.par}
 In this section, we use some definitions, properties and facts
about Gr\"obner basis,
e.g., monomial orders, definition of (reduced) Gr\"obner basis, and so on.
 We refer the reader to \cite{CLO} for more information on them.

 The following theorem is one of the main results of this paper.

\begin{theorem}\label{main.thm}
 The set
	 ${\cal G} = \{ g_P \mid \text{$P$ is an irreducible path of $G$} \}$
is the reduced Gr\"obner basis of $I_G$ with respect to
the reverse lexicographic order $< = <_{\rm revlex}$ on $S$ with
	 $Y_1 > Y_2 > \cdots > Y_n > X_1 > X_2 > \cdots > X_n$.
\end{theorem}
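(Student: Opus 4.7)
The plan is to verify Buchberger's criterion for $\mathcal{G}$ and then argue reducedness. In the revlex order with $Y_1 > \cdots > Y_n > X_1 > \cdots > X_n$, the smallest variable occurring in $[p_0,p_\ell] = X_{p_0}Y_{p_\ell} - Y_{p_0}X_{p_\ell}$ (with $p_0 < p_\ell$) is $X_{p_\ell}$, appearing with a negative sign, so the revlex tie-break designates $X_{p_0}Y_{p_\ell}$ as the leading monomial; hence $\initial_<(g_P) = M_P \cdot X_{p_0} Y_{p_\ell}$, a squarefree monomial supported on $V(P)$. Next I verify $g_P \in I_G$ by induction on the length $\ell$ of $P$: the base $\ell = 1$ is immediate, and for $\ell \ge 2$ the Pl\"ucker identity $[p_0,p_\ell] Y_{p_1} = [p_0,p_1] Y_{p_\ell} + Y_{p_0}[p_1,p_\ell]$ (when $p_1 < p_0$, with an $X$-analogue when $p_1 > p_\ell$) splits $g_P$ into a monomial multiple of the generator $[p_0,p_1]$ plus a term involving the shorter minor $[p_1,p_\ell]$, because $Y_{p_1}$ (resp.\ $X_{p_1}$) divides $M_P$ since $p_1$ is a joint. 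Decomposing $p_1 p_2 \cdots p_\ell$ into irreducible sub-paths via \kk{\ref{path.dec.par}}, telescoping $[p_1,p_\ell]$ at the cut vertices, and invoking Lemma \ref{mon.div.lem} (or its mirror in Remark \ref{reverse.rmk}\kk{1}) to verify that the resulting monomial coefficients come from factors of $M_P$, the inductive hypothesis closes the argument.

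The crux is Buchberger's criterion. If $\initial_<(g_P)$ and $\initial_<(g_Q)$ are coprime, the $S$-polynomial reduces to zero automatically. The decisive case is when $P$ and $Q$ share an end, say $p_0 = q_0$ and $p_\ell < q_m$. A direct calculation yields
$$ S(g_P, g_Q) = -\lcm(M_P, M_Q) \cdot Y_{p_0} \cdot [p_\ell, q_m]. $$
Pick a minimal path $R \le P + Q$ from $p_\ell$ to $q_m$ and decompose $R = R_1 + \cdots + R_s$ into irreducible pieces via \kk{\ref{path.dec.par}}. Telescoping $[p_\ell, q_m]$ by iterated Pl\"ucker relations at the cut vertices $r_{i_t}$ writes $Y_{p_0} \cdot [p_\ell, q_m]$ as a signed sum of monomial multiples of the sub-minors $[r_{i_{t-1}}, r_{i_t}]$, and Lemma \ref{connect.lem} ensures that each such coefficient, divided by the corresponding $M_{R_t}$, remains a monomial dividing $\lcm(M_P, M_Q) \cdot Y_{p_0}$. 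This turns every summand into an $S$-multiple of $g_{R_t} \in \mathcal{G}$ whose multidegree stays within $\lcm(\initial_<(g_P), \initial_<(g_Q))$, i.e., a standard representation of $S(g_P, g_Q)$. The symmetric configuration $p_\ell = q_m$ is handled by Remark \ref{reverse.rmk}\kk{2}, and other overlap patterns reduce to these two after passing to suitable sub-paths.

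Finally, $\mathcal{G}$ is reduced: each $g_P$ is monic, and its trailing monomial $M_P Y_{p_0} X_{p_\ell}$ cannot be divided by $\initial_<(g_Q) = M_Q X_{q_0} Y_{q_m}$ for any $Q \ne P$, because the $X/Y$-pattern of the initial monomial uniquely encodes the endpoints, and irreducibility plus minimality then force $Q = P$. The main obstacle is the second paragraph: the combinatorial bookkeeping in the telescoped Pl\"ucker cascade must keep the total multidegree inside the required lcm at each step. Lemmas \ref{mon.div.lem} and \ref{connect.lem} (together with the analogues in Remark \ref{reverse.rmk}) are engineered precisely to deliver these divisibilities, so the real work lies in invoking them at the correct stages of the cascade.
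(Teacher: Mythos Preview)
Your overall architecture matches the paper's: compute $\initial_<(g_P)$, prove $g_P\in I_G$ by a Pl\"ucker induction, run Buchberger's criterion, then check reducedness. The inclusion argument and the shared-end $S$-polynomial reduction you give are essentially the paper's \kk{\ref{include.par}} and \kk{\ref{case2.par}}.

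The gap is the sentence ``other overlap patterns reduce to these two after passing to suitable sub-paths.'' This is precisely where the paper does most of its work, and it does \emph{not} reduce those patterns to the shared-end case. With $a=p_0$, $b=p_\ell$, $c=q_0$, $d=q_m$ and $a<c$, $b\ne d$, the initial monomials can fail to be coprime in three genuinely different ways even though no endpoint is shared:
\begin{itemize}
\item[\kk{i}] The overlap sits entirely in $\gcd(M_P,M_Q)$ (neither $Y_b\mid M_Q$ nor $X_c\mid M_P$). Here (paper \kk{\ref{case3.1.par}}) one has
\[
S(g_P,g_Q)=M'_P\,Y_aX_b\cdot g_Q-M'_Q\,Y_cX_d\cdot g_P,
\]
a standard representation using $g_P,g_Q$ themselves; no sub-paths enter and nothing ``reduces'' to a shared-end configuration.
\item[\kk{ii}] Exactly one of $Y_b\mid M_Q$, $X_c\mid M_P$ holds (an endpoint of one path is a joint of the other). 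Then $b<c$, and one must decompose $Q_{b\to c}$ and $Q_{b\to d}$ (resp.\ the $P$-analogues) via \kk{\ref{path.dec.par}} and invoke Lemma~\ref{mon.div.lem} / Remark~\ref{reverse.rmk}\kk{1} to push the ``$X/Y$ catalyst'' through; see \kk{\ref{case3.2.par}}.
\item[\kk{iii}] Both hold. Now $b\in J(Q)$ and $c\in J(P)$, and one needs the sub-path decompositions of $P_{a\to c}$ and of $Q_{b\to d}$ simultaneously (paper \kk{\ref{case3.3.par}}).
\end{itemize}
None of these three is an instance of the shared-end computation you carried out, and the divisibility bookkeeping in \kk{ii}--\kk{iii} uses Lemma~\ref{mon.div.lem} (internal sub-paths of a single irreducible path), not Lemma~\ref{connect.lem} (concatenation of two paths at a common end). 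Your proposal conflates the two lemmas' roles; as written, the Buchberger verification is incomplete. Supplying the three cases above, exactly as in \kk{\ref{case3.1.par}}--\kk{\ref{case3.3.par}}, closes the gap.
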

\begin{example}\label{goranger.ex}
 Let $G$ be the following graph :
\begin{center}
\setlength\unitlength{0.5mm}
\begin{picture}(80,50)(0,0)
	\put(10,40){$2$}
	\put(25,10){$4$}
	\put(39,45){$1$}
	\put(55,5){$5$}
	\put(68,37){$3$}
	\put(25,15){\line(-1,2){12}}
	\put(28,17){\line(2,5){11}}
	\put(55,11){\line(-2,5){13}}
	\put(58,11){\line(2,5){10}}
\end{picture}
\end{center}
\vskip -5mm
 A path of $G$ is determined by its ends, so $G$ has ten paths.
 But the Gr\"obner basis ${\cal G}$ of $I_G$ consists of {\it nine} binomials
since the path $2$--$4$--$1$--$5$ is not irreducible.
 Explicitly,
$${\cal G} = \left\{
	\begin{array}{c}
		 [1,4], \, [1,5], \, [2,4], \, [3,5], \, Y_1[4,5], \\
		 X_4[1,2], \, X_5[1,3], \, Y_1X_5[3,4], \, Y_1X_4X_5[2,3]
	\end{array} \right\}$$
is the reduced Gr\"obner basis of $I_G$.
\end{example}

  For the rest of this section, we prove Theorem~\ref{main.thm}.

\paragraph\label{initial.par}
 For an irreducible path $P = p_0 p_1 \ldots p_\ell$,
the initial monomial $\initial_< g_P$ of $g_P$ is $M_P \cdot X_{p_0} Y_{p_\ell}$.

 Put
	 $J(P) = \{ p_{i_1} < p_{i_2} < \cdots < p_{i_s}
		 < p_{j_1} < p_{j_2} < \cdots < p_{j_t} \}$
with $p_{i_s} < p_0 < p_\ell < p_{j_1}$.
 Then the binomial $g_P$ is equal to
$$ \prod_{1 \le u \le s} Y_{p_{i_u}} \cdot X_{p_0} Y_{p_m} \cdot \prod_{i \le v \le t} X_{p_{j_v}}
	 - \prod_{1 \le u \le s} Y_{p_{i_u}} \cdot Y_{p_0} X_{p_m} \cdot \prod_{i \le v \le t} X_{p_{j_v}}.$$

 Two terms of an element of ${\cal G}$ cannot be divided
by the initial monomial of any element of ${\cal G}$,
so ${\cal G}$ is reduced if ${\cal G}$ is a Gr\"obner basis of $I_G$.

\paragraph\label{Buchberger.par}
 From now, we prove that ${\cal G}$ is a Gr\"obner basis of $I_G$
by Buchberger's criterion.

 In this and the next section,
let $R$ be a polynomial ring over a field $K$ with a monomial order $<$.
 For two polynomials $f$, $g \in R$, then
	 $$ S(f,g) = \frac{\initial_< g}{\gcd (\initial_< f, \initial_< g)} \cdot f - \frac{\initial_< f}{\gcd (\initial_< f, \initial_< g)} \cdot g$$
is called the $S$-polynomial of $f$ and $g$.

\begin{proposition}[Buchberger's criterion]\label{Buchberger.prop}
 Let $I$ be an ideal in $R$.
 A finite system of generators
	 ${\cal G} = \{ g_1, g_2, \ldots, g_t \} \subset I$
is a Gr\"obner basis for $I$
if and only if for all pairs $i \ne j$,
the remainder on division of the $S$-polynomial $S(g_i, g_j)$ by ${\cal G}$ is zero.
\end{proposition}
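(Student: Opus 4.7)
My plan is to handle the two directions of this equivalence separately. The forward direction is immediate from the definition of a Gr\"obner basis: if $(\initial_< g_1, \ldots, \initial_< g_t)$ equals $\initial_<(I)$, then the multivariate division algorithm reduces any element of $I$---in particular each $S$-polynomial $S(g_i, g_j)$---to remainder zero. The substantive content lies in the converse.

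For the backward direction I will use the \emph{minimal representation} strategy. Fix $f \in I$ and consider all expressions $f = \sum_i h_i g_i$ with $h_i \in R$; attach to each the monomial $\mu(h_\bullet) := \max_i \initial_<(h_i g_i)$, with the convention $\initial_<(0) = 0$. Since $<$ well-orders the monomials, some representation attains the minimum value $\mu_0$, and clearly $\mu_0 \ge \initial_<(f)$. If equality holds then $\initial_<(f) = \initial_<(h_j g_j)$ for some $j$, so $\initial_< g_j$ divides $\initial_< f$; this puts $\initial_< f$ into $(\initial_< g_1, \ldots, \initial_< g_t)$, which is precisely the Gr\"obner basis condition. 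So it suffices to rule out the possibility $\mu_0 > \initial_<(f)$ under the $S$-polynomial hypothesis.

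Suppose $\mu_0 > \initial_<(f)$ and let $T := \{ i : \initial_<(h_i g_i) = \mu_0 \}$. The strict inequality forces the $\mu_0$-contributions of $\{h_i g_i\}_{i \in T}$ to cancel. The key ingredient will be the standard \emph{cancellation-to-syzygy} lemma: any such cancelling sum $\sum_{i \in T} c_i x^{\alpha_i} g_i$ (where $c_i x^{\alpha_i}$ is the leading term of $h_i$) can be expressed as a $K$-linear combination $\sum_{i,j \in T} d_{ij}\, x^{\gamma_{ij}} S(g_i, g_j)$, in which each summand $x^{\gamma_{ij}} S(g_i, g_j)$ has leading monomial strictly below $\mu_0$. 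Its proof is the classical telescoping trick: rescale each $c_i x^{\alpha_i} g_i$ to be monic with leading monomial $\mu_0$, arrange the cancellation as a telescope $\sum (p_{i_k} - p_{i_{k+1}})$ of these normalized polynomials, and recognize each consecutive difference $p_i - p_j$ as a monomial multiple of $S(g_i, g_j)$.

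Finally I apply the $S$-polynomial hypothesis: each $S(g_i, g_j)$ has a remainder-zero division $S(g_i, g_j) = \sum_k a_{ij,k} g_k$ with $\initial_<(a_{ij,k} g_k) \le \initial_< S(g_i, g_j)$. Multiplying by $x^{\gamma_{ij}}$ and substituting into the cancellation expansion rewrites $\sum_{i \in T} c_i x^{\alpha_i} g_i$ as $\sum_k b_k g_k$ with every $\initial_<(b_k g_k) < \mu_0$. Folding this back into the original expression produces a new representation of $f$ with $\mu < \mu_0$, contradicting the minimality of $\mu_0$. The main obstacle will be the cancellation lemma: the careful bookkeeping needed to confirm that the telescoping rewrite's pieces all have leading monomial strictly below $\mu_0$, which rests on the defining property $\initial_< S(g_i, g_j) < \lcm(\initial_< g_i, \initial_< g_j)$.
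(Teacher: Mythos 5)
The paper does not actually prove this proposition; it simply cites Cox--Little--O'Shea, Chapter~2, \S 6, Theorem~6, and your sketch is precisely the standard argument given there: the forward direction from the definition of a Gr\"obner basis, and the converse via a representation of $f \in I$ minimizing $\max_i \initial_<(h_i g_i)$ together with the cancellation-to-syzygy (telescoping) lemma that rewrites a cancelling sum of leading terms through $S$-polynomials. Your outline is correct and matches the cited source; the one ingredient you defer --- that each piece of the telescoping rewrite has leading monomial strictly below $\mu_0$, resting on $\initial_< S(g_i,g_j) < \lcm(\initial_< g_i, \initial_< g_j)$ --- is exactly the auxiliary lemma in that section of CLO, so nothing essential is missing.
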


\begin{proof}
 See Cox, Little and O'Shea \cite{CLO}, Chapter 2, \S 6, Theorem~6.
\end{proof}

\paragraph\label{include.par}
 First, we prove that ${\cal G} \subset I_G$.

 Take an irreducible path $P = p_0 p_1 \ldots p_\ell$.
 We prove it by induction on the length $\ell$.
 If $\ell = 1$, $P$ is an edge of $G$ and
$g_P = [p_0, p_\ell]$ is contained in $I_G$.
 Assume that $\ell > 1$.
 Then $J(P)$ is not empty,
so either $J(P) \cap (-\infty, p_0)$ or $J(P) \cap(p_\ell, \infty)$ is not empty.
 If $J(P) \cap (-\infty, p_0) \ne \emptyset$ holds, we set
	 $p_a := \max J(P) \cap (-\infty, p_0)$.
 Then subpaths $P_{a \to 0}$ and $P_{a \to \ell}$ are irreducible and
	 $M_P = M_{P_{a \to 0}} \cdot M_{P_{a \to \ell}} \cdot Y_a$ holds.
 Then $g_P = Y_{p_0} M_{P_{a \to 0}} \cdot g_{P_{a \to \ell}} - Y_{p_\ell} M_{P_{a \to \ell}} \cdot g_{P_{a \to 0}}$ is contained in $I_G$.
 If $J(P) \cap(p_\ell, \infty) \ne \emptyset$, we can prove similarly for
	 $a = \min J(P) \cap(p_\ell, \infty)$.

\paragraph\label{class.par}
 We classify pairs of polynomials in ${\cal G}$
whose initial monomials are not coprime
whether their ends are the same or not.

 Let $P = p_0 p_1 \ldots p_\ell$ and $Q = q_0 q_1 \ldots q_m$
be irreducible paths of ${\cal G}$.
 Set $a = p_0$, $b = p_\ell$, $c = q_0$ and $d = q_m$.
 We suppose that $a \le c$ without loss of generality.
 In addition, we assume that $\gcd(\initial_< g_P, \initial_< g_Q) \ne 1$.

\paragraph\label{case1.par}
 We first suppose that $a = c$ and $b = d$.
 Then the $S$-polynomial $S(g_P, g_Q)$ is equal to zero,
so we have nothing to do.

\paragraph\label{case2.par}
 We secondly suppose that $a=c$ and $b \ne d$ hold.
 We can assume that $b<d$, then
	 $S(g_P,g_Q) = \lcm (M_P, M_Q) \cdot (Y_a X_b Y_d - Y_a Y_b X_d)$.
 We take a minimal path $R = r_0 r_1 \ldots r_k \le P+Q$ and
decompose $R$ into a sum
	 $R_{i_0 \to i_1} + R_{i_1 \to i_2} + \cdots + R_{i_{s-1} \to i_s}$
of irreducible paths as in \kk{\ref{connect.par}}.
 By Lemma~\ref{connect.lem},
	 $$ S(g_P, g_Q) = M' \cdot X_b X_{r_{i_1}} X_{r_{i_2}} \cdots X_{r_{i_{s-1}}} Y_d - Y_b X_{r_{i_1}} X_{r_{i_2}} \cdots X_{r_{i_{s-1}}} X_d,$$
where $M'$ is a monomial which can be divided by each $M_{R_{i_{t-1} \to i_t}}$ for $t = 1, 2, \ldots,s$.
 Then it holds that
\begin{align*}
 S(g_P,g_Q)
	 &\overset{\kk{s}}{\equiv}
		 M' \cdot (X_b X_{r_{i_1}} X_{r_{i_2}} \cdots X_{r_{i_{s-2}}} Y_{r_{i_{s-1}}} X_d-Y_b X_{r_{i_1}} X_{r_{i_2}} \cdots X_{r_{i_{s-1}}} X_d) \\
	 &\overset{\kk{s-1}}{\equiv}
		 M' \cdot (X_b X_{r_{i_1}} X_{r_{i_2}} \cdots Y_{r_{i_{s-2}}} X_{r_{i_{s-1}}} X_d-Y_b X_{r_{i_1}} X_{r_{i_2}} \cdots X_{r_{i_{s-1}}} X_d) \\
	 &\equiv \cdots \equiv 0,
\end{align*}
where the equivalence \kk{u} is induced by $g_{R_{i_{u-1} \to i_u}}$.
 As it were, the monomial $M_{R_{i_{t-1} \to i_t}}$ is a ``catalyst''
to exchange letters ``$X$'' and ``$Y$''.

 If $b=d$ and $a \ne c$, we can similarly prove using fact \kk{2}
in Remark~\ref{reverse.rmk}.

\paragraph\label{case3.par}
 Until the end of this section, we suppose that $a \ne c$ and $b \ne d$.
 In this case, the following may hold only if $b<c$ :
\begin{center}
	 \kk{a} the variable $Y_b$ divides $M_Q$,~~~~~~~~
	 \kk{b} the variable $X_c$ divides $M_P$.
\end{center}
 In fact, the condition \kk{a} (resp. \kk{b}) is equivalent to
that $b \in J(Q)$ and $b<c$ (resp. $c \in J(P)$ and $b<c$).

\paragraph\label{case3.1.par}
 In this paragraph, we suppose that neither \kk{a} nor \kk{b} hold.
 Set $M'_P = \frac{M_P}{H}$ and $M'_Q = \frac{M_Q}{H}$,
where $H = \gcd (M_P, M_Q)$.
 Then,
\begin{align*}
	 S(g_P,g_Q) &= H M'_P M'_Q (Y_a X_b X_c Y_d - X_a Y_b Y_c X_d) \\
		 &= M'_P Y_a X_b \cdot g_Q - M'_Q Y_c X_f \cdot g_P.
\end{align*}

\paragraph\label{case3.2.par}
 In this paragraph, we suppose that \kk{a} holds and \kk{b} does not hold.
 Now $b$ is a joint of $Q$ and $b<c$.
 So we decompose subpaths $Q_{b \to c}$ and $Q_{b \to d}$ into sums
\begin{gather*}
	 Q_{b \to c} = Q_{i_0 \to i_1} + Q_{i_1 \to i_2} + \cdots + Q_{i_{s-1} \to i_s}, \\
	 Q_{b \to d} = Q_{j_0 \to j_1} + Q_{j_1 \to j_2} + \cdots + Q_{j_{t-1} \to j_t}.
\end{gather*}
of irreducible paths of $G$ as in \kk{\ref{path.dec.par}}.
 Set $M'_P = \frac{M_P}{H}$ and $M'_Q = \frac{M_Q}{Y_b H}$,
where $H = \gcd (M_P,M_Q)$.
 Then,
\begin{align}
\notag	 S(g_P,g_Q) &= M'_P X_a \cdot g_Q - M'_Q X_c Y_d \cdot g_P \\
\tag{$\clubsuit$}	 &= H M'_P M'_Q (Y_a X_b X_c Y_d - X_a Y_b Y_c X_d).
\end{align}
 The first monomial of \kk{$\clubsuit$} can be written as
$$ N_1 \cdot \prod_{1 \le v \le t} M_{Q_{j_{v-1} \to j_v}} \cdot X_b Y_{j_1} Y_{j_2} \cdots Y_{j_{t-1}} Y_d,$$
where $N_1$ is a suitable monomial.
 Each $M_{Q_{j_{v-1} \to j_v}}$ is a ``catalyst'' to exchange
letters ``$X$'' and ``$Y$'',
so it is equivalent to $H M'_P M'_Q \cdot Y_a Y_b X_c X_d$.
 Similarly, the second monomial of \kk{$\clubsuit$} is equivalent to
the same monomial by $g_{Q_{i_{u-1} \to i_u}}$'s.

 Using fact \kk{1} in Remark~\ref{reverse.rmk},
$S(g_P,g_Q)$ is equivalent to $0$ by ${\cal G}$
if \kk{a} does not hold and \kk{b} holds.

\paragraph\label{case3.3.par}
 In this paragraph, we suppose that \kk{a} and \kk{b} holds.
 Now $b$ (resp. $c$) is a joint of $Q$ (resp. $P$).
 Put $M'_P = \frac{M_P}{H X_c}$ and $M'_Q = \frac{M_Q}{H Y_b}$,
where $H = \gcd (M_P, M_Q)$.
 Then $S(g_P,g_Q) = M'_Q Y_d g_P - M'_PX_a g_Q
	 = H M'_P M'_Q (X_a Y_b Y_c X_d - Y_a X_b X_c Y_d)$.
 We decompose the subpath $P_{a \to c}$ (resp. $Q_{b \to d}$) into a sum
	 $P_{i_0 \to i_1} + P_{i_1 \to i_2} + \cdots + P_{i_{s-1} \to i_s}$
	 (resp. $Q_{j_t \to j_{t-1}} + Q_{j_{t-1} \to j_{t-2}} + \cdots + Q_{j_1 \to j_0}$)
of irreducible paths of $P$ as in \kk{\ref{mon.div.par}}
(resp. in Remark~\ref{reverse.rmk} \kk{1}).
 By Lemma~\ref{mon.div.lem} and the fact \kk{1} in Remark~\ref{reverse.rmk},
the monomials $M_{P_{i_{u-1}\to i_u}}$ and the variables $X_{p_{i_u}}$
(resp. $M_{Q_{j_v \to j_{v-1}}}$ and $Y_{j_{i_v}}$) divide $M_{P}$ (resp. $M_Q$).

 The $S$-polynomial $S(g_P,g_Q)$ is written as
\begin{gather}
\tag{$\spadesuit$}
	 M_1 \cdot X_a X_{i_1} X_{i_2} \cdots X_{i_{s-1}} Y_c - M_2 \cdot X_b Y_{j_{t-1}} Y_{j_{t-2}} \cdots Y_{i_1} Y_d,
\end{gather}
where $M_1$ (resp. $M_2$) is a monomial which is divided by
	 $\prod _{1 \le u \le s} M_{P_{i_{u-1} \to i_u}}$
(resp. $\prod _{1 \le v \le t} M_{Q_{j_v \to j_{v-1}}}$).
 Each monomial in $\kk{\spadesuit}$ is equivalent to the monomial
$H M'_P M'_Q Y_a Y_b X_c X_d$ by $g_{P_{i_{u-1} \to i_u}}$'s and $g_{Q_{j_v \to j_{v-1}}}$'s,
so its remainder with respect to ${\cal G}$ is zero.

\section{Primary Decomposition}

\begin{proposition}\label{reduced.prop}
 The ideal $I_G$ is a radical ideal.
\end{proposition}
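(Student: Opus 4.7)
The plan is to deduce radicality of $I_G$ from the Gr\"obner basis description obtained in Theorem~\ref{main.thm}, by appealing to the well-known principle that an ideal whose initial ideal (with respect to some monomial order) is a squarefree monomial ideal is itself a radical ideal. This result is standard (see, e.g., Sturmfels, Gr\"obner Bases and Convex Polytopes) and reduces our task to a purely combinatorial check on the generators of $\initial_< I_G$.

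First I would invoke Theorem~\ref{main.thm} to identify the initial ideal: since ${\cal G}$ is a Gr\"obner basis with respect to the specified reverse lexicographic order, we have
\[ \initial_< I_G = \bigl( \initial_< g_P \mid P \text{ is an irreducible path of } G \bigr) = \bigl( M_P \cdot X_{p_0} Y_{p_\ell} \mid P = p_0 p_1 \ldots p_\ell \text{ irreducible} \bigr), \]
using the identification of $\initial_< g_P$ recorded in \kk{\ref{initial.par}}. So it suffices to verify that each monomial $M_P \cdot X_{p_0} Y_{p_\ell}$ is squarefree.

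The key point is combinatorial: because $P$ is a path (in particular, its vertices $p_0, p_1, \ldots, p_\ell$ are pairwise distinct), the joints $J(P)$ consist of distinct indices, and $J(P)$ is disjoint from $\{p_0, p_\ell\}$. By the definition of $M_P$ in \kk{\ref{def.gP.par}}, $M_P$ is a product of variables $Z_p$ indexed by the distinct elements $p \in J(P)$, and for each such $p$ the variable $Z_p$ is either $X_p$ (if $p > p_\ell$) or $Y_p$ (if $p < p_0$)—in particular, different from both $X_{p_0}$ and $Y_{p_\ell}$ (since $p \neq p_0, p_\ell$) and never repeated. Hence $M_P \cdot X_{p_0} Y_{p_\ell}$ is indeed squarefree, so $\initial_< I_G$ is a squarefree monomial ideal and $I_G$ is radical.

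There is no real obstacle here beyond invoking the correct black-box theorem; the substance of the argument was already done in establishing the Gr\"obner basis. The only point one must be careful about is confirming that the joints $J(P)$ really do contribute distinct variables not coinciding with $X_{p_0}$ or $Y_{p_\ell}$, which follows immediately from the definition of a path.
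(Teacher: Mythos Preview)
Your proposal is correct and follows essentially the same route as the paper: deduce radicality from the squarefreeness of $\initial_< I_G$, which in turn follows from the explicit Gr\"obner basis of Theorem~\ref{main.thm}. The only cosmetic difference is that the paper states and proves the ``squarefree initial ideal $\Rightarrow$ radical'' principle in-house as Lemma~\ref{easy.lem}, whereas you cite it as a standard fact.
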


 To prove Proposition~\ref{reduced.prop}, the following lemma is essential.

\begin{lemma}\label{easy.lem}
 Let $I$ be an ideal in a polynomial ring $R$ over a field $K$.
 Assume that the initial ideal $\initial_< I$ with respect to a monomial
order $<$ is generated by squarefree monomials.
 Then $I$ is a radical ideal.
\end{lemma}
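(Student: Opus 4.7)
The plan is to argue by contradiction via Gr\"obner-basis normal forms. Pick any $f \in \sqrt{I}$; I want to deduce $f \in I$. After replacing $f$ by its remainder on division by a Gr\"obner basis of $I$ --- which changes $f$ only modulo $I$ and hence preserves membership in $\sqrt{I}$ --- I may assume that $f$ is a normal form, meaning that no monomial appearing in $f$ is divisible by any monomial of $\initial_< I$. In particular $\initial_< f \notin \initial_< I$ (unless $f = 0$, which is the case we want).

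Next, since $f \in \sqrt{I}$, there is some integer $n \ge 1$ with $f^n \in I$, and so $\initial_<(f^n) \in \initial_< I$. The key identity here is the multiplicativity of the leading term, $\initial_<(f^n) = (\initial_< f)^n$, which holds because a monomial order is compatible with multiplication of monomials: the largest monomial in the expansion of $f^n$ is the $n$-th power of the largest monomial of $f$. Hence $(\initial_< f)^n \in \initial_< I$.

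The squarefree hypothesis enters only at the final step, to promote $(\initial_< f)^n \in \initial_< I$ to $\initial_< f \in \initial_< I$. Writing $\initial_< I = (m_1, \ldots, m_r)$ with each $m_i$ squarefree, some $m_i$ must divide the monomial $(\initial_< f)^n$; but because $m_i$ uses each variable at most once, every variable appearing in $m_i$ already appears in $\initial_< f$, and hence $m_i$ itself divides $\initial_< f$. This gives $\initial_< f \in \initial_< I$, contradicting what we said above, so we are forced to conclude $f = 0$ in the normal form, i.e., $f \in I$.

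No serious obstacle is anticipated --- the three ingredients (the division algorithm producing a unique normal form, multiplicativity of the leading term, and the elementary observation that squarefree monomial ideals are ``radical'' at the monomial level) are all standard, and the only small care needed is in justifying $\initial_<(f^n) = (\initial_< f)^n$, which is immediate from the axioms of a monomial order.
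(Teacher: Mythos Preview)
Your proof is correct and follows essentially the same approach as the paper: reduce $f\in\sqrt{I}$ to a normal form so that $\initial_< f\notin\initial_< I$, use $\initial_<(f^n)=(\initial_< f)^n\in\initial_< I$, and then invoke the radicality of the squarefree monomial ideal $\initial_< I$ to reach a contradiction. The only difference is that you spell out in more detail why a squarefree monomial ideal is radical and why the leading term is multiplicative, which the paper leaves implicit.
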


\begin{proof}
 Note that $\initial_< I$ is a radical ideal.
 Assume that $\sqrt{I} \ne I$ and take $f \in \sqrt{I} \setminus I$.
 Taking a normal form of $f$, we can suppose that
	 $\initial_< f \not\in \initial_< I$.
 Take an integer $n$ with $f^n \in I$.
 Then $(\initial_< f)^n = \initial_< f^n \in \initial_< I$,
so $\initial_< f \in \initial_< I$ and it is a contradiction.
\end{proof}

 By Proposition~\ref{reduced.prop}, the ideal $I_G$ is the intersection of
all prime ideals which contain $I_G$.
 In this section, we study a way to find all minimal prime ideals of $I_G$.

\paragraph\label{norm.par}
 Denote $d_G$ the distance on the set of vertices of $G$
which is defined by lengths of paths, namely, for two vertices $x$, $y$ of $G$,
	 $$ d_G (x,y) := \min \{ n \mid \text{there is a path $P$ of length $n$ with $E(P) = \{ x,y \}$} \}, $$
or $d_G (x,y) = \infty$ if there is not such a path.

\paragraph\label{neighbor.par}
 Let $v$ be a vertex of $G$.
 We call the set $\{ x \mid d_G(x,v) = 1 \}$ the {\it neighborhood} of $v$ in $G$,
denoted $N_G(v)$.
 We say that $G$ is {\it complete around $v$} if it holds that
$d_G(x,y) \le 1$ for any $x, y \in N_G(v)$.

\paragraph\label{subdet.ideal.par}
 For a subset $A \subset [n]$, we denote the ideal
	 $([a,b] \mid a, b \in A)$
in $S$ by $I_2(A)$.

\begin{proposition}\label{prime.prop}
 The following conditions are equivalent : \\
\kk{1} $G$ is complete around all vertices of $G$, \\
\kk{2} $G$ is a disjoint union of complete graphs, \\
\kk{3} the ideal $I_G$ is a prime ideal.
\end{proposition}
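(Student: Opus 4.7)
My plan is to close the chain (1)$\Leftrightarrow$(2)$\Rightarrow$(3)$\Rightarrow$(2). The equivalence (1)$\Leftrightarrow$(2) is purely graph-theoretic, (2)$\Rightarrow$(3) reduces to the classical primality of the determinantal ideal of a $2\times n$ generic matrix, and (3)$\Rightarrow$(2) is handled by contraposition, exhibiting an explicit zero-divisor produced from a ``cherry'' subgraph $x$--$y$--$z$ of $G$.

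For (1)$\Rightarrow$(2), I fix a connected component $C$ of $G$ and prove any two of its vertices are adjacent: take a shortest path $v_0 v_1 \cdots v_k$ inside $C$ joining them, and observe that if $k \ge 2$ then $v_0, v_2 \in N_G(v_1)$ are adjacent by (1), giving a shorter path and contradicting minimality. The reverse (2)$\Rightarrow$(1) is immediate since $N_G(v)$ lies inside the complete component of $v$.

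For (2)$\Rightarrow$(3), write $V(G) = A_1 \sqcup \cdots \sqcup A_r$ with each induced subgraph on $A_k$ complete. Then $I_G = I_2(A_1) + \cdots + I_2(A_r)$, and the summands are supported in pairwise disjoint variable sets. Setting $S_k = K[X_i, Y_i \mid i \in A_k]$ yields $S/I_G \cong \bigotimes_{k=1}^r S_k/I_2(A_k)$ as $K$-algebras. Each factor $S_k/I_2(A_k)$ is the classical $2\times|A_k|$ determinantal ring, a prime domain by \cite{BH, BV}, and in fact geometrically integral, being the affine cone over the Segre embedding of $\mathbb{P}^1\times\mathbb{P}^{|A_k|-1}$, whose defining ideal stays prime under any extension of $K$. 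Hence the tensor product over $K$ is a domain and $I_G$ is prime. This is the main obstacle of the proof: plain integrality of each factor is insufficient for primality of the tensor product over a possibly non-algebraically-closed field, so one must invoke geometric integrality (or, equivalently, give an injective monomial parametrization $X_i \mapsto s_k t_i$, $Y_i \mapsto s'_k t_i$ of $S/I_G$).

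For (3)$\Rightarrow$(2) I argue the contrapositive. If $G$ is not a disjoint union of complete graphs, then by (1)$\Leftrightarrow$(2) some vertex $y$ has neighbors $x$ and $z$ with $\{x, z\}$ not an edge. A direct polynomial-ring computation verifies the identity
$$ X_y \, [x, z] \;=\; X_x \, [y, z] - X_z \, [y, x] $$
in $S$, whose right-hand side lies in $I_G$ since $\{x, y\}$ and $\{y, z\}$ are edges; hence $X_y \cdot [x, z] \in I_G$. Neither factor belongs to $I_G$: by Theorem~\ref{main.thm} every initial monomial $\initial_< g_P = M_P \, X_{p_0} Y_{p_\ell}$ has degree at least $2$, so the linear form $X_y$ is not in $\initial_< I_G$, while the degree-$2$ monomial $\initial_< [x, z]$ can be divisible by some $\initial_< g_P$ only when $M_P = 1$ and $\{p_0, p_\ell\} = \{x, z\}$, forcing $\{x, z\}$ to be an edge, a contradiction.
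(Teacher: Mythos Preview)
Your proof follows essentially the same route as the paper's: the shortest-path argument for \kk{1}$\Leftrightarrow$\kk{2}, the tensor-product factorization for \kk{2}$\Rightarrow$\kk{3}, and the Pl\"ucker-type identity $X_y[x,z]=X_x[y,z]-X_z[y,x]$ (which the paper records as a $3\times 3$ determinant expansion) for the contrapositive of \kk{3}$\Rightarrow$\kk{1}.

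Two differences in detail are worth noting. First, you rightly flag that a tensor product of domains over an arbitrary field need not be a domain and resolve this via geometric integrality of the Segre cones; the paper simply asserts that $S/I_G$ is a domain from the fact that each factor is a determinantal ring, leaving this point implicit. Second, to show $X_y\notin I_G$ and $[x,z]\notin I_G$ you invoke Theorem~\ref{main.thm}, whereas the paper just states these non-memberships. Your argument is correct; a lighter alternative for $[x,z]\notin I_G$ is that the degree-$2$ part of $I_G$ is the $K$-span of the linearly independent set $\{[i,j]:\{i,j\}\in E(G)\}$, so $[x,z]\in I_G$ would force $\{x,z\}$ to be an edge.
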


\begin{proof}
\kk{1}$\Rightarrow$\kk{2}.
 Let $x, y$ be vertices of $G$ with $x<y$ and $d_G(x,y) < \infty$.
 It is enough to prove that $d_G(x,y) = 1$.
 Assume that $t := d_G(x,y) \ge 2$ and
take a path $P = p_0 p_1 \ldots p_t$ of length $t$ with $p_0 = x$ and $p_t = y$.
 $G$ is complete around $p_1$, so there is an edge $\{ p_0, p_2 \}$ and
it contradicts $d_G(x,y) = t$.\\
\kk{2}$\Rightarrow$\kk{3}
 Put $G = \coprod _{1 \le i \le a} G_i$,
where each $G_i$ is a complete graph.
 We denote $V_i \subset [n]$ the vertex set of $G_i$.
 Then $I_G = \sum_{i} I_2(V_i)$ and
$$ S/I_G \simeq \bigotimes _i \frac{k[X_v, Y_v \mid v \in V_i]}{I_2(V_i)}.$$
 Each $k[X_v, Y_v \mid v \in V_i]/I_2(V_i)$ is a determinantal ring,
so $S/I_G$ is an integral domain.\\
$\kk{3}\Rightarrow\kk{1}$.
 Assume that there is a vertex $v$ around which $G$ is not complete.
 Then there are vertices $u,w \in N_G(v)$ with $d_G(u,w)=2$.
 Then the equation
\begin{gather}\label{key.eq} \tag{$\heartsuit$}
	 0 = \det \begin{pmatrix}
		 X_u & X_v & X_w \\
		 X_u & X_v & X_w \\
		 Y_u & Y_v & Y_w
	 \end{pmatrix} = X_u \cdot [v,w] - X_v \cdot [u,w] + X_w \cdot [u,v]
\end{gather}
implies that $X_v \cdot [u,w] \in I_G$.
 $I_G$ is not prime since $X_v \not\in I_G$ and $[u,w] \not\in I_G$.
\end{proof}

\paragraph\label{step.par}
 In this paragraph, we suppose that $I_G$ is not prime.
 By Proposition~\ref{prime.prop}, there is a vertex $v$
around which $G$ is not complete.
 Then there are $u, w \in N_G(v)$ with $d_G(u,w) = 2$.

 Let $P$ be a prime ideal of $S$ which contains $I_G$.
 By the equation~\kk{\ref{key.eq}},
it holds that $X_v \in P$ or that $[u,w] \in P$.
 Similarly it holds that $Y_v \in P$ or that $[u,v] \in P$,
so $(X_v, Y_v) \subset P$ or $[u,w] \in P$ holds.
 Thinking all pairs $(u,w)$ of $N_G(v)$,
$P$ contains one of the following ideals :
	$$ \kk{1}~~I_G + (X_v, Y_v), ~~~~~~~~~~~~~~~
		 \kk{2}~~I_G + I_2(N_G(v)). $$

 These ideals correspond to the following operations of graphs :
\begin{gather} \tag{$\diamondsuit$}
\begin{array}{l}
	 \text{\kk{1} taking away $v$ and all edges of which $v$ is an end from $G$,} \\
	 \text{\kk{2} adding all edges which connect two vertices in $N_G(v)$ to $G$.}
\end{array}
\end{gather}

\begin{lemma}\label{step.lem}
 Let $v$ be a vertex around which $G$ is not complete, then
$$ I_G = \left( I_G + (X_v, Y_v) \right)
		 \cap \left( I_G + I_2(N_G(v)) \right).$$
\end{lemma}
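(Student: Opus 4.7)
The plan is to reduce the claim to a fact that is essentially already established in paragraph~\kk{\ref{step.par}}, together with the radicality of $I_G$. Set $J_1 = I_G + (X_v, Y_v)$ and $J_2 = I_G + I_2(N_G(v))$. One inclusion is trivial: since $I_G \subseteq J_1$ and $I_G \subseteq J_2$, we have $I_G \subseteq J_1 \cap J_2$. So the content is the reverse inclusion $J_1 \cap J_2 \subseteq I_G$.

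For this I would invoke Proposition~\ref{reduced.prop}, which tells us that $I_G$ is radical, so that
\[
I_G = \bigcap_{P} P,
\]
where the intersection runs over all (minimal) prime ideals $P$ of $S$ containing $I_G$. The key observation from \kk{\ref{step.par}} is that for each such prime $P$, the identity $\kk{\ref{key.eq}}$ applied to $u,w \in N_G(v)$ with $d_G(u,w)=2$ forces $P$ to contain $J_1$ or $J_2$. (If for every pair $u,w \in N_G(v)$ one had $[u,w] \in P$, then $I_2(N_G(v)) \subseteq P$ and $J_2 \subseteq P$; otherwise, some pair forces $X_v, Y_v \in P$ and $J_1 \subseteq P$.)

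Given this, the argument is immediate. Take $f \in J_1 \cap J_2$ and let $P$ be any minimal prime over $I_G$. If $J_1 \subseteq P$, then $f \in J_1 \subseteq P$; otherwise $J_2 \subseteq P$ and $f \in J_2 \subseteq P$. In either case $f \in P$. Intersecting over all minimal primes gives $f \in \bigcap_P P = I_G$, which proves $J_1 \cap J_2 \subseteq I_G$ and completes the proof.

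There is no real obstacle here, since both ingredients (radicality and the dichotomy on primes over $I_G$) are already in place. The only care I would take is to make clear that the dichotomy of \kk{\ref{step.par}} applies to \emph{every} prime ideal containing $I_G$ (not merely to some privileged class), which is what lets the elementary $\bigcap P$-argument close the loop.
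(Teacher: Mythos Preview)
Your proof is correct and is essentially the same as the paper's own argument: both use the trivial inclusion $I_G \subseteq J_1 \cap J_2$, the dichotomy from \kk{\ref{step.par}} that every prime over $I_G$ contains $J_1$ or $J_2$, and the radicality of $I_G$ from Proposition~\ref{reduced.prop} to conclude $J_1 \cap J_2 \subseteq \sqrt{I_G} = I_G$.
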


\begin{proof}
 The subsumption $\subset$ is trivial.
 Let $P$ be a prime ideal which contains $I_G$.
 Then $P$ contains the right hand side by the above discussion, 
so it holds that the right hand side is contained in $\sqrt{I_G} = I_G$.
\end{proof}

\begin{lemma}\label{decrease.lem}
 In the above operations \kk{$\diamondsuit$},
	 the number of vertices around which the graph is not complete decreases.
\end{lemma}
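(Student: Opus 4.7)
The plan is to show that both operations in $(\diamondsuit)$ remove the distinguished vertex $v$ from the set
$$B(G) := \{\,u \in [n] \mid G \text{ is not complete around } u\,\},$$
while introducing no new elements. Since $v \in B(G)$ by assumption, this gives $|B(G_i)| < |B(G)|$ for $i=1,2$, where $G_1, G_2$ are the graphs produced by operations (1) and (2).

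For operation (1), I would simply note that if $v$ is deleted, then for every surviving vertex $u$ one has $N_{G_1}(u) = N_G(u) \setminus \{v\}$, while the induced subgraph on $[n]\setminus\{v\}$ is unchanged. Consequently every pair $x, y \in N_{G_1}(u) \subseteq N_G(u)$ that was joined by an edge in $G$ is still joined in $G_1$, so completeness around $u$ is preserved. Trivially $v \notin B(G_1)$ since $v$ is no longer even a vertex.

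For operation (2), forming $G_2$ by adding every edge between pairs in $N_G(v)$, I would first check $v \notin B(G_2)$: the neighborhood $N_{G_2}(v) = N_G(v)$ is unaffected, and now every pair inside $N_G(v)$ is joined in $G_2$ by construction. To show no new vertex enters $B$, take $u \neq v$ around which $G$ is complete. If $u \notin N_G(v)$ then the added edges do not touch $u$, so $N_{G_2}(u) = N_G(u)$ and completeness is preserved. The interesting case is $u \in N_G(v)$, which I would handle with the following observation: since $v \in N_G(u)$ and $G$ is complete around $u$, every other $x \in N_G(u)$ must satisfy $\{x, v\} \in E(G)$, i.e.\ $x \in N_G(v)$. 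Hence $N_G(u) \setminus \{v\} \subseteq N_G(v) \setminus \{u\}$, and therefore $N_{G_2}(u) = (N_G(v) \setminus \{u\}) \cup \{v\}$. Every pair in $N_G(v) \setminus \{u\}$ is joined by an edge of $G_2$ via operation (2), while pairs involving $v$ are already edges of $G$; so $G_2$ is complete around $u$.

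The main obstacle is the last case, where one must check that enlarging the edge set of $G$ does not spoil completeness around some $u \in N_G(v)$. The crucial step is the propagation observation $N_G(u) \subseteq N_G(v) \cup \{v\}$, which guarantees that the new edges added by operation (2) are precisely enough to fill in all missing pairs in $N_{G_2}(u)$. Everything else reduces to bookkeeping about what neighborhoods look like after each graph operation.
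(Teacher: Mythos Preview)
Your proof is correct and follows the same overall strategy as the paper: both arguments show that $B(G_i) \subseteq B(G)\setminus\{v\}$, so the bad set strictly shrinks. The paper phrases this contrapositively (if $G_i$ is not complete around some $u\neq v$, then neither is $G$) and, in the critical case $u\in N_G(v)$ for operation~(2), it picks witnesses $x,y\in N_{G_2}(u)$ with $d_{G_2}(x,y)=2$ and shows that $x$ (or $y$) together with $v$ witness non-completeness of $G$ around $u$; your direct argument via the propagation observation $N_G(u)\subseteq N_G(v)\cup\{v\}$, which lets you compute $N_{G_2}(u)$ exactly, is a slightly cleaner way to handle that same case.
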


\begin{proof}
 Set $G_1$ (resp. $G_2$) as the graph which is made by the operation 
\kk{1} (resp. \kk{2}).
 Take a vertex $u \ne v$ of $G$.
 It is easy to see that $G$ is not complete around $u$ if so is $G_1$.

 Assume that $G_2$ is not complete around $u$.
 Then there are $x, y \in N_{G_2}(u)$ with $d_{G_2}(x,y) = 2$.
 We have nothing to prove if $d_G(x,y) = 2$.
 Otherwise, an edge $\{x, u\}$ or $\{y,u\}$ is added by the operation,
i.e., it holds that $u \in N_G(v)$ and that $x$ or $y \in N_G(v)$.
 If both $x$ and $y \in N_G(v)$, then $d_{G_2}(x,y) = 1$.
 So $N_G(v)$ does not contain $x$ or $y$.
 If $x \not\in N_G(v)$, then $x$ and $v \in N_G(u)$ and $d_G(x,v) = 2$.
 Then $G$ is not complete around $u$.
\end{proof}

 By Lemma~\ref{step.lem} and Lemma~\ref{decrease.lem},
repeating operations \kk{$\diamondsuit$},
we have a set of prime ideals which contains all minimal prime ideals of $I_G$.
 So we can decompose $I_G$ into the intersection of some prime ideals.

\begin{example}\label{D5.ex}
  Let $G$ be the following graph :
\begin{center}
\setlength\unitlength{0.7mm}
\begin{picture}(40,20)(0,0)
	 \put(0,0){$5$}
	 \put(0,20){$4$}
	 \put(10,10){$1$}
	 \put(24,10){$2$}
	 \put(38,10){$3$}
	 \put(4,4){\line(5,6){5}}
	 \put(4,20){\line(5,-6){5}}
	 \put(13,12){\line(1,0){10}}
	 \put(27,12){\line(1,0){10}}
\end{picture}
\end{center}
 Now, $G$ is not complete around $1$.
 Then we get the following graphs :
\begin{center}
\setlength\unitlength{0.7mm}
\begin{picture}(40,25)(0,0)
	 \put(30,20){\kk{1}}
	 \put(0,0){$5$}
	 \put(0,20){$4$}
	 \put(10,10){$1$}
	 \put(11,12){\circle{5}}
	 \put(24,10){$2$}
	 \put(38,10){$3$}
	 \multiput(4,4)(1,1){6}{\circle*{1}}
	 \multiput(4,20)(1,-1){6}{\circle*{1}}
	 \multiput(15,12)(2,0){5}{\circle*{1}}
	 \put(27,12){\line(1,0){10}}
\end{picture}~~ and ~~~~~~~~~~
\begin{picture}(40,25)(0,0)
	 \put(30,20){\kk{2}}
	 \put(0,0){$5$}
	 \put(0,20){$4$}
	 \put(10,10){$1$}
	 \put(24,10){$2$}
	 \put(38,10){$3$}
	 \put(2,5){\line(0,1){14}}
	 \put(4,4){\line(5,6){5}}
	 \put(4,3){\line(3,1){20}}
	 \put(4,20){\line(5,-6){5}}
	 \put(4,21){\line(3,-1){20}}
	 \put(13,12){\line(1,0){10}}
	 \put(27,12){\line(1,0){10}}
\end{picture},
\end{center}
where a circled number means a vertex which is taken away.
 The left graph \kk{1} is a disjoint union of complete graphs,
but the right graph \kk{2} is not complete around the vertex $2$.
 So we operate on the graph \kk{2} for \kk{$\diamondsuit$},
we get the following graphs.
\begin{center}
\setlength\unitlength{0.7mm}
\begin{picture}(40,25)(0,0)
	 \put(30,20){\kk{1}}
	 \put(0,0){$5$}
	 \put(0,20){$4$}
	 \put(10,10){$1$}
	 \put(25,12){\circle{5}}
	 \put(24,10){$2$}
	 \put(38,10){$3$}
	 \put(2,5){\line(0,1){14}}
	 \put(4,4){\line(5,6){5}}
	 \multiput(4,3)(3,1){7}{\circle*{1}}
	 \put(4,20){\line(5,-6){5}}
	 \multiput(4,21)(3,-1){7}{\circle*{1}}
	 \multiput(13,12)(2,0){5}{\circle*{1}}
	 \multiput(29,12)(2,0){5}{\circle*{1}}
\end{picture} ~~ and ~~~~~~~~~~
\begin{picture}(40,25)(0,0)
	 \put(30,20){\kk{2}}
	 \put(0,0){$5$}
	 \put(0,20){$4$}
	 \put(10,10){$1$}
	 \put(24,10){$2$}
	 \put(38,10){$3$}
	 \put(2,5){\line(0,1){14}}
	 \put(4,4){\line(5,6){5}}
	 \put(4,3){\line(3,1){20}}
	 \put(4,2){\line(4,1){32}}
	 \put(25,9){\oval(28,8)[b]}
	 \put(4,20){\line(5,-6){5}}
	 \put(4,21){\line(3,-1){20}}
	 \put(4,22){\line(4,-1){32}}
	 \put(13,12){\line(1,0){10}}
	 \put(27,12){\line(1,0){10}}
\end{picture}.
\end{center}
 Both graphs are disjoint unions of complete graphs.
 So the process finishes and we get the decomposition
	 $$ I_G = (X_1,Y_1,[2,3]) \cap (X_2,Y_2, [1,4], [1,5], [4,5]) \cap I_2(X), $$
where $I_2(X)$ is the ideal generated by all $2$-minors of $X$.
\end{example}

\section{Properties of Graphs}

 In this section, we suppose that $G$ is a connected simple graph on $[n]$
with $n > 2$.

\begin{definition}\label{Hamilton.def}
 A path $P$ of $G$ is {\it Hamilton} if $V(P) = [n]$.
 A cycle $p_0 p_1 \ldots p_\ell p_0$ is {\it Hamilton}
if the subpath $p_0 p_1 \ldots p_\ell$ is a Hamilton path.
\end{definition}

 In graph theory,
it is a very difficult and important problem 
whether G has a Hamilton cycle.
 In fact, a nontrivial necessary and sufficient condition for existence of
Hamilton cycles is not known.

 For example, $G$ has a Hamilton cycle if one of the following holds :\\
\kk{1} $\sharp N_G(a) + \sharp N_G(b) \ge n$ for any vertices $a$ and $b$, 
where $\sharp$ means the number of the elements of the set, see Ore~\cite{Ore}.\\
\kk{2} $G$ is planar and $4$-connected, see Tutte~\cite{Tutte}.

\begin{proposition}\label{ham.path.prop}
 If $G$ has a Hamilton path, then $\height I_G = n-1$.
 The converse holds if $G$ is a tree, i.e., $G$ has no cycle.
\end{proposition}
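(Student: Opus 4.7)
The plan is to treat the two directions separately. For the forward implication, I would first note that $\height I_G \le n-1$ is automatic: $I_G$ is contained in $I_2(X)$, the ideal of all $2$-minors, which is prime of height $n-1$ by classical determinantal theory \cite{BH}. The work is in the reverse inequality. Given a Hamilton path $P = p_0 p_1 \cdots p_{n-1}$, I would consider the $n-1$ edge minors $[p_{i-1}, p_i] \in I_G$ and observe that the variable relabelling $X_i \mapsto X_{p_{i-1}}$, $Y_i \mapsto Y_{p_{i-1}}$ is a $K$-algebra automorphism of $S$ carrying the adjacent $2$-minors $[1,2], \ldots, [n-1, n]$ onto this sequence. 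Those adjacent minors form a regular sequence by Diaconis--Eisenbud--Sturmfels \cite{DES}, and the regular sequence property is preserved by ring automorphisms, so $I_G$ contains a regular sequence of length $n-1$, yielding $\height I_G \ge n-1$.

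For the converse I would assume $G$ is a tree and prove the contrapositive: if $G$ is not itself a path, then $\height I_G < n-1$. (An edge count identifies this with having no Hamilton path, since any Hamilton path in a tree on $n$ vertices uses $n-1$ edges and must therefore coincide with the tree.) The strategy is to exhibit by hand a prime ideal $\mathfrak{p} \supset I_G$ of small height. Pick a vertex $v$ of degree $d \ge 3$, which exists because $G$ is not a path; deleting $v$ splits $G$ into exactly $d$ subtrees $T_1, \ldots, T_d$ on pairwise disjoint vertex sets $V(T_i)$ of sizes $n_i$, with $\sum n_i = n-1$. Since $I_{T_i}$ is generated by the $n_i - 1$ edge minors of $T_i$, Krull's height theorem produces a prime $\mathfrak{q}_i \supset I_{T_i}$ of height at most $n_i - 1$, and I may take $\mathfrak{q}_i$ to live inside the polynomial subring on the variables indexed by $V(T_i)$. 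Set
$$ \mathfrak{p} := (X_v, Y_v) + \mathfrak{q}_1 + \cdots + \mathfrak{q}_d. $$

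The main obstacle is verifying that $\mathfrak{p}$ is both prime and contains $I_G$. Containment is routine: every generator $[i,j]$ of $I_G$ either meets $v$ and is killed by $(X_v, Y_v)$, or is an edge of some $T_i$ and hence lies in $\mathfrak{q}_i$. Primality will follow from the disjointness of the variable blocks: after quotienting by $(X_v, Y_v)$ the ring becomes a polynomial ring in the remaining variables, and the images of the $\mathfrak{q}_i$ lie in disjoint subrings, making $S/\mathfrak{p}$ a tensor product of domains over $K$, still a domain. Once these are in hand, the height bound
$$\height \mathfrak{p} \le 2 + \sum_{i=1}^{d}(n_i - 1) = n + 1 - d \le n - 2$$
yields $\height I_G \le n - 2 < n - 1$, finishing the converse.
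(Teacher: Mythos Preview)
Your overall strategy matches the paper's: for the forward direction both of you relabel so that the Hamilton path becomes $1\text{--}2\text{--}\cdots\text{--}n$, invoke that the adjacent minors $[1,2],\ldots,[n-1,n]$ form a regular sequence, and bound above via $I_G\subset I_2(X)$; for the converse both of you delete a vertex $v$ of degree $\ge 3$ and bound the height of the resulting ideal using Krull's height theorem on the generators.

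There is, however, a genuine gap in your converse. The assertion that $S/\mathfrak{p}$ is ``a tensor product of domains over $K$, still a domain'' is false in general: over a non-algebraically-closed field a tensor product of domains need not be a domain (e.g.\ $\mathbb{C}\otimes_{\mathbb{R}}\mathbb{C}\cong\mathbb{C}\times\mathbb{C}$). Since the paper places no hypothesis on $K$, you cannot conclude that your $\mathfrak{p}$ is prime from this alone, and without primality your height estimate for $\mathfrak{p}$ does not immediately transfer to a prime containing $I_G$.

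The paper sidesteps this by \emph{not} splitting $G\setminus v$ into its components. It simply observes that the tree $G$ has $n-1$ edges, so $\tilde G:=G\setminus v$ has at most $n-4$ edges, whence $\height I_{\tilde G}\le n-4$ by Krull. Taking a single minimal prime $\mathfrak q$ of $I_{\tilde G}$ in the subring $K[X_i,Y_i:i\ne v]$ and setting $\mathfrak p=(X_v,Y_v)+\mathfrak q S$, one gets $S/\mathfrak p\cong K[X_i,Y_i:i\ne v]/\mathfrak q$, which is a domain outright (no tensor product needed), and $\height\mathfrak p=2+\height\mathfrak q\le n-2$. Your argument is easily repaired along these lines, or alternatively by choosing each $\mathfrak q_i$ explicitly (say as an ideal of variables) so that the quotients are polynomial rings.
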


\begin{proof}
 Permuting vertices, we have a graph $G' \simeq G$ which has a Hamilton
path $1$--$2$--$\cdots$--$n$. Then $I_G'$ contains a sequence
 $[1,2], \, [2,3], \, \ldots, \, [n-1,n]$.
 It is a regular sequence and $\height I_G = \height I_{G'} \ge n-1$.
 It holds that $\height I_G \le n-1$ since $I_G \subset I_2(X)$.

 Assume that $G$ is a tree.
 $G$ has a vertex $v$ with $\sharp N_G(v) \ge 3$
if $G$ does not have a Hamilton path.
 Applying the operation~\kk{1} of \kk{$\diamondsuit$} to $v$,
we get a graph $\tilde{G}$ which has at most $n-4$ edges.
 Then $\height I_{\tilde{G}} \le n-4$, so $\height I_G \le \height I_{\tilde{G}} + 2 \le n-2$.
\end{proof}

\begin{remark}\label{hidora.ex}
 If $G$ is not a tree,
the converse of Proposition~\ref{ham.path.prop} is not true in general.
 For example, let $G$ be the following graph :

\begin{center}
\setlength\unitlength{1mm}
\begin{picture}(30,15)
  \put(10,14){$1$}
	\put(10,7){$2$}
	\put(10,0){$3$}
	\put(20,14){$4$}
	\put(20,7){$5$}
	\put(20,0){$6$}
	\put(12,15){\line(1,0){8}}
	\put(12,8){\line(1,0){8}}
	\put(12,1){\line(1,0){8}}
	\put(10,8){\oval(6,14)[l]}
	\put(11,3){\line(0,1){4}}
	\put(11,10){\line(0,1){4}}
\end{picture}
\end{center}
\vskip -2mm
 Then $\height I_G = 5$ but $G$ does not have a Hamilton path.
\end{remark}

\paragraph\label{assh.par}
 For an ideal $I$ in $S$, we denote the set of prime ideals
	 $\{ P \in \Spec S \mid P \supset I \, \text{and}\, \height I = \height P \}$
by $\Assh_S{S/I}$.

\begin{proposition}\label{ham.cycle.prop}
 If $G$ has a Hamilton cycle, then $\Assh_S {S/I_G} = \{ I_2(X) \}$.
\end{proposition}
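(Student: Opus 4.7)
The plan is as follows. A Hamilton cycle contains a Hamilton path, so Proposition~\ref{ham.path.prop} gives $\height I_G = n-1$. Since $I_2(X)$ is a prime ideal of height $n-1$ containing $I_G$, it automatically belongs to $\Assh_S(S/I_G)$; what remains is uniqueness.

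By iterating Lemma~\ref{step.lem}, every minimal prime $P$ of $I_G$ arises from the algorithm of Section~4: starting from $G$, apply a sequence of operations from $(\diamondsuit)$ until reaching a terminal graph $G^*$, which is a disjoint union of complete graphs. Write $W \subset [n]$ for the set of vertices discarded by operation~(1), let $V_1, \ldots, V_c$ denote the components of $G^*$ of size at least $2$, and let $s$ be the number of singleton components. Then
\[ P = (X_w,\, Y_w \mid w \in W) + \sum_{i=1}^{c} I_2(V_i), \qquad \height P = 2|W| + \sum_{i=1}^{c}(|V_i|-1) = n + |W| - (c+s). \]

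The heart of the argument is to establish two combinatorial facts. \emph{First}, if $|W| \ge 1$ then $c + s \le |W|$, whence $\height P \ge n$. Indeed, operation~(1) removes a vertex together with its incident edges, while operation~(2) only adds edges, so no edge between two surviving vertices is ever deleted; hence the edge set of $G^*$ restricted to $[n]\setminus W$ contains that of $G$ restricted to $[n]\setminus W$. Since removing $|W|$ vertices from the Hamilton cycle leaves at most $|W|$ arcs, $G|_{[n]\setminus W}$, and a fortiori $G^*|_{[n]\setminus W}$, has at most $|W|$ connected components. \emph{Second}, if $|W| = 0$ then $G^* = K_n$: operation~(2) only adds edges, so $G^*$ inherits the Hamilton cycle of $G$, and a disjoint union of complete graphs on all $n$ vertices admitting a Hamilton cycle must be $K_n$; the corresponding prime is exactly $I_2(X)$.

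Combining the two facts, the only minimal prime of $I_G$ of height $n-1$ is $I_2(X)$, so $\Assh_S(S/I_G) = \{I_2(X)\}$. The main obstacle is the combinatorial bound $c + s \le |W|$ in the first step, which is precisely where the Hamilton cycle hypothesis enters in an essential way.
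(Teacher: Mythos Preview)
Your proof is correct and somewhat more explicit than the paper's. The paper argues more briefly: if a prime $P \in \Assh_S(S/I_G)$ contains any variable $X_v$ (hence also $Y_v$, by the form of the primes produced in Section~4), then $P \supseteq (X_v,Y_v) + I_{G\setminus v}$; since removing one vertex from a Hamilton cycle leaves a Hamilton path, Proposition~\ref{ham.path.prop} gives $\height I_{G\setminus v}=n-2$, whence $\height P\ge n$, a contradiction. The case where $P$ contains no variables (your $|W|=0$ case) is left implicit. Your route instead computes the height of every prime produced by the algorithm via the formula $\height P = n+|W|-(c+s)$ and then bounds $c+s\le |W|$ using the elementary fact that deleting $|W|$ vertices from a cycle leaves at most $|W|$ arcs; this is essentially the paper's single-vertex observation carried out for all $|W|$ at once. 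The paper's argument is quicker and leans directly on Proposition~\ref{ham.path.prop}; yours is more self-contained and yields the extra structural information that \emph{every} prime output by the algorithm with $W\ne\emptyset$ already has height at least $n$.
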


\begin{proof}
 Take $P \in \Assh_S {S/I_G}$.
 Then $P$ is generated by some variables and some $2$-minors of $X$,
see \kk{\ref{step.par}}.
 We suppose that $P$ contains a variable $X_v$.
 Then $G \setminus v$ is a graph of size $n-1$ and has a Hamilton path,
so $\height I_{G \setminus v} = n-2$.
 $P$ contains $I_{G \setminus v} + (X_v, Y_v)$, then
	 $\height P \ge \height I_{G \setminus v} + 2 = n$,
it contradicts $\height P = \height I_G = n-1$.
\end{proof}

\begin{remark}\label{countex.rmk}
 The converse of Proposition~\ref{ham.cycle.prop} is not true in general.
 For example, let $G$ be the following graph :
 \vskip 2mm
\begin{center}
\setlength\unitlength{1mm}
\begin{picture}(40,15)
  \put(10,14){$1$}
	\put(10,7){$2$}
	\put(10,0){$3$}
	\put(20,14){$4$}
	\put(20,7){$5$}
	\put(20,0){$6$}
	\put(30,7){$7$}
	\put(12,15){\line(1,0){8}}
	\put(12,8){\line(1,0){8}}
	\put(12,1){\line(1,0){8}}
	\put(10,8){\oval(6,14)[l]}
	\put(11,3){\line(0,1){4}}
	\put(11,10){\line(0,1){4}}
	\put(22,15){\line(5,-3){8}}
	\put(22,8){\line(1,0){8}}
	\put(22,1){\line(5,3){8}}
\end{picture}
\end{center}
 Now $G$ has Hamilton paths,
so $\height I_G = 6$ holds and $I_2(X)$ is contained in $\Assh_S {S/I_G}$.
 However $G \setminus 7$ is isomorphic to the graph in Remark~\ref{hidora.ex},
and $G \setminus v$ has Hamilton paths for each $v = 1, 2, \ldots, 6$.
 Then we have $\height I_{G \setminus v} = 5$ for each $v = 1, 2, \ldots, 7$.
 So $\height P \ge 7$ holds for each associated prime ideal $P \ne I_2(X)$.
\end{remark}

\end{document}